\documentclass[12pt]{amsart}
\usepackage{amscd,amsmath,amsthm,amssymb}
\usepackage[left]{lineno}
\usepackage{pstcol,pst-plot,pst-3d}%\usepackage[T1]{fontenc}
\usepackage{color}
\usepackage{pstricks}
\usepackage{stmaryrd}
\newpsstyle{fatline}{linewidth=1.5pt}
\newpsstyle{fyp}{fillstyle=solid,fillcolor=verylight}
\definecolor{verylight}{gray}{0.97}
\definecolor{light}{gray}{0.9}
\definecolor{medium}{gray}{0.85}
\definecolor{dark}{gray}{0.6}

 %
 %------    GENERAL MACROS    -----
 %
 % Standard rings and fields, affine and projective space
 %
 \def\NZQ{\mathbb}               % the font for N,Z,Q,R,C
 \def\NN{{\NZQ N}}
 
 \def\ZZ{{\NZQ Z}}
 \def\RR{{\NZQ R}}

 %
 %------------------------------------------------
 % Symbols in "Fraktur"
 %
                % font for "Fraktur"

 %\def\Phi{{\frk n}}
 %\def\Phi{{\frk N}}
 %
 %------------------------------------------------

 \def\G{{\mathcal G}}

 \def\P{{\mathcal P}}
  
  \def\Hc{{\mathcal H}}

 % Small letters in bold
 %
 \def\ab{{\mathbf a}}
 
 \def\xb{{\mathbf x}}

 \def\eb{{\mathbf e}}
 \def\opn#1#2{\def#1{\operatorname{#2}}} % to make operators
 %------------------------------------------------
 % Numerical invariants of rings, ideals, and modules
 %
 \opn\chara{char} \opn\length{\ell} \opn\pd{pd} \opn\rk{rk}
 \opn\projdim{proj\,dim} \opn\injdim{inj\,dim} \opn\rank{rank}
 \opn\depth{depth} \opn\grade{grade} \opn\height{height}
 \opn\embdim{emb\,dim} \opn\codim{codim}
 
 \opn\Tr{Tr} \opn\bigrank{big\,rank}
 \opn\superheight{superheight}\opn\lcm{lcm}
 \opn\trdeg{tr\,deg}%\emph{
 \opn\reg{reg} \opn\lreg{lreg} \opn\ini{in} \opn\lpd{lpd}
 \opn\size{size} \opn\sdepth{sdepth}
 \opn\link{link}\opn\fdepth{fdepth}\opn\lex{lex}
 \opn\tr{tr}
 \opn\type{type}
 \opn\gap{gap}
 \opn\arithdeg{arith-deg}
 \opn\astab{astab}
  \opn\dstab{dstab}
  \opn\pol{pol}
  \opn\mat{mat}
  \opn\indmat{indmat}
 %------------------------------------------------
 % Divisors
 %
 \opn\div{div} \opn\Div{Div} \opn\cl{cl} \opn\Cl{Cl}
 %
 %------------------------------------------------
 % Subsets of the spectrum of a ring
 %
 \opn\Spec{Spec} \opn\Supp{Supp} \opn\supp{supp} \opn\Sing{Sing}
 \opn\Ass{Ass} \opn\Min{Min}\opn\Mon{Mon}
 %
 %------------------------------------------------
 % Standard operations on ideals and modules
 %
 \opn\Ann{Ann} \opn\Rad{Rad} \opn\Soc{Soc}
 %
 %------------------------------------------------
 % Linear algebra and homology, endo- and automorphisms
 %
 \opn\Im{Im} \opn\Ker{Ker} \opn\Coker{Coker} \opn\Am{Am}
 \opn\Hom{Hom} \opn\Tor{Tor} \opn\Ext{Ext} \opn\End{End}
 \opn\Aut{Aut} \opn\id{id}
 
 \opn\nat{nat}
 \opn\pff{pf}%   \pf exists already
 \opn\Pf{Pf} \opn\GL{GL} \opn\SL{SL} \opn\mod{mod} \opn\ord{ord}
 \opn\Gin{Gin} \opn\Hilb{Hilb}\opn\sort{sort}
 \opn\PF{PF}\opn\Ap{Ap}
 \opn\mult{mult}
 %
 %------------------------------------------------
 % Convexity
 %
 \opn\aff{aff}
 \opn\relint{relint} \opn\st{st}
 \opn\lk{lk} \opn\cn{cn} \opn\core{core} \opn\vol{vol}  \opn\inp{inp} \opn\nilpot{nilpot}
 \opn\link{link} \opn\star{star}\opn\lex{lex}\opn\set{set}
 \opn\width{wd}
 \opn\Fr{F}
 \opn\QF{QF}
 \opn\G{G}
 \opn\type{type}\opn\res{res}
 \opn\conv{conv}
 %------------------------------------------------
 % Graded rings and Rees algebras
 \opn\gr{gr}
 
 %
 %------------------------------------------------
 % Polynomials and power series
 %
 
 \def\pot#1#2{#1[\kern-0.28ex[#2]\kern-0.28ex]}

 %
 %------------------------------------------------
 % Direct and inverse limits
 %
 \opn\dirlim{\underrightarrow{\lim}}
 \opn\inivlim{\underleftarrow{\lim}}
 %
 %
 % Names with a meaning
 %
 \let\union=\cup
 \let\sect=\cap

 \let\iso=\cong

 \let\Dirsum=\bigoplus
 
 %
 %------------------------------------------------
 %

 \def\Implies{\ifmmode\Longrightarrow \else
         \unskip${}\Longrightarrow{}$\ignorespaces\fi}
 \def\implies{\ifmmode\Rightarrow \else
         \unskip${}\Rightarrow{}$\ignorespaces\fi}
 \def\iff{\ifmmode\Longleftrightarrow \else
         \unskip${}\Longleftrightarrow{}$\ignorespaces\fi}

 \let\:=\colon
 \newtheorem{Theorem}{Theorem}[section]
 \newtheorem{Lemma}[Theorem]{Lemma}
 \newtheorem{Corollary}[Theorem]{Corollary}
 \newtheorem{Proposition}[Theorem]{Proposition}
 
 \newtheorem{Remarks}[Theorem]{Remarks}
 \newtheorem{Example}[Theorem]{Example}

 %
 % We like the var forms of some greek letters (as taught in German schools)
 %
 \let\epsilon\varepsilon
 \let\kappa=\varkappa
 %
 %           We print on A4 paper
 %
 \textwidth=15cm \textheight=22cm \topmargin=0.5cm
 \oddsidemargin=0.5cm \evensidemargin=0.5cm \pagestyle{plain}
 %
 %           The pf environment of AMSART needs a little help
 %
 \def\qed{\ifhmode\textqed\fi
       \ifmmode\ifinner\quad\qedsymbol\else\dispqed\fi\fi}
 \def\textqed{\unskip\nobreak\penalty50
        \hskip2em\hbox{}\nobreak\hfil\qedsymbol
        \parfillskip=0pt \finalhyphendemerits=0}
 \def\dispqed{\rlap{\qquad\qedsymbol}}
 
 %
 % ------    END OF GENERAL MACROS    -------
 \opn\dis{dis}
 \def\pnt{{\raise0.5mm\hbox{\large\bf.}}}
 
 \opn\Lex{Lex}

 %-- macro for local cohomology-----------------------------
 
 %-- macro for a complicated condition for the extended
 %-- Hochster's formula

%\linenumbers

 \begin{document}

\title {Matching numbers and the regularity of the Rees algebra of an edge ideal}

\author {J\"urgen Herzog and Takayuki Hibi}

\address{J\"urgen Herzog, Fachbereich Mathematik, Universit\"at Duisburg-Essen, Campus Essen, 45117
Essen, Germany} \email{juergen.herzog@uni-essen.de}

\address{Takayuki Hibi, Department of Pure and Applied Mathematics, Graduate School of Information Science and Technology,
Osaka University, Suita, Osaka 565-0871, Japan}
\email{hibi@math.sci.osaka-u.ac.jp}

\dedicatory{ }

\begin{abstract}
The regularity of the Rees ring of the edge ideal of a finite simple graph is studied. We show that the matching number is a lower and matching number~$+1$ is an upper bound of the regularity, if the Rees algebra is normal. In general the induced matching number is a lower bound for the regularity,   which  can be shown by applying the squarefree divisor complex.
\end{abstract}

\thanks{ The second author was partially supported by JSPS KAKENHI 19H00637}

\subjclass[2010]{Primary 13H10; Secondary  13D02, 05E40}
%		13H10   	Special types (Cohen-Macaulay, Gorenstein, Buchsbaum, etc.)
%		13D02   	Syzygies, resolutions, complexes
%		05E40   	Combinatorial aspects of commutative algebra
%		16S36   	Ordinary and skew polynomial rings and semigroup rings

%		14M25   	Toric varieties, Newton polyhedra [See also 52B20]
%		13A02   	Graded rings
%		13F20   	Polynomial rings and ideals; rings of integer-valued polynomials
%		13A18   	Valuations and their generalizations
%		06A11   	Algebraic aspects of posets

\keywords{}

\maketitle

\setcounter{tocdepth}{1}
%\tableofcontents
\section*{Introduction}

In the study of powers of monomial ideals,   the Rees algebra  plays an important role. In the present paper we focus on the  Rees algebra of the edge ideal of a finite simple graph.

Let $K$ be a field and $S=K[x_1,\dots,x_n]$ be the polynomial ring over $K$ in the  variables $x_1,\ldots,x_n$. Furthermore, let $G$  be a simple graph on the vertex set $V(G)=[n]$ and edge set $E(G)$. The edge ideal $I=I(G)$ of $G$ is the monomial ideal in $S$ generated by the monomials $x_ix_j$ with $\{i,j\}\in E(G)$, and the edge ring $K[G]$ is the $K$-algebra generated by the monomials $x_ix_j\in I$. The Rees algebra $R(I)=\Dirsum_{s\geq 0}I^s$ of $I$ may be viewed as the edge ring of the graph $G^*$ with  $V(G^*)=[n+1]$  and $E(G^*)=E(G)\union\{\{i,n+1\}\:\; i\in V(G)\}$.  We are interested in bounding the regularity of $R(I)$, because this provides information about the regularity  of the powers of $I$.
In our situation, computing regularity of $R(I)$ amounts to compute the regularity of the edge ring of graphs of the form $G^*$. This class of algebras are particular classes of toric rings.  To our knowledge,  there are essentially  two methods available to compute the regularity of a toric ring $A$.  The first method, which can always be applied, is to compute the multigraded Betti numbers of $A$ by using the {\em squarefree divisor complex} (see \cite[Proposition 1.1]{BH1} and \cite[Theorem 3.28]{HHO}). In concrete cases, as discussed in Section 1, this allows to give lower bounds for  the regularity, but in general it is hard to use. The second method can be applied, if $A$ is Cohen--Macaulay, in which case one needs to compute the $a$-invariant of $A$. If in addition,  $A$ is normal, then following  Danilov and Stanley \cite[Theorem 6.3.5]{BH},  the canonical  module can computed which in particular gives us the $a$-invariant of $A$.

Let $A$ be a toric ring generated by monomials in $S$. A $K$-subalgebra $B$ of $A$ is called a {\em combinatorial pure} subalgebra of $A$ if there exists a subset $T\subset [n]$ such that $B=A\sect K[x_i\:\; i\in T]$.  In Section 1 we recall  squarefree divisor complexes and use them to prove that if $B\subset A$ is a combinatorial pure subring of $A$, then  $\beta_{i,j}(B)\leq  \beta_{i,j}(A)$ for all $i$ and $j$. Here the $\beta_{i,j}(M)$ denote  the graded Betti numbers of a graded module $M$. This result implies in particular that if $I$ is a monomial ideal generated in a single degree, then  $\reg R(I)\geq \reg F(I)$, where $F(I)$ is the fiber cone of $I$.  We also use squarefree divisor complexes to give lower bounds of the regularity of the Rees algebra of $I(G)$, when $G$ is a disjoint union of edges. These  results will be used in the next section.

Now, we devote Section $2$ to finding an upper bound and a lower bound of the regularity of the Rees algebra $R(I(G))$ of the edge ideal $I(G)$ of a finite simple graph $G$ in terms of the matching number of $G$ and the induced matching number of $G$.  Recall that a {\em matching} of $G$ is a subset $M \subset E(G)$ such that $e \cap e' = \emptyset$ for all $e$ and $e'$ belonging to $M$ with $e \neq e'$.  A matching $M$ of $G$ is called {\em perfect} if, for each $i \in [n]$, there is $e \in M$ with $i \in e$.  An {\em induced matching} of $G$ is a matching $M$ of $G$ such that if $e$ and $e'$ belong to $M$ with $e \neq e'$, then there is no edge $f \in E(G)$ with $e \cap f \neq \emptyset$ and $e' \cap f \neq \emptyset$.  The maximal  of matchings of $G$ is called the {\em matching number} of $G$ and is denoted by $\mat(G)$.  The {\em induced matching number} of $G$, denoted by $\indmat(G)$, is the maximal cardinality of induced matchings of $G$.

It is known \cite[Corollary 2.3]{OH} that $K[G]$ is normal if and only if each connected component $G'$ of $G$ satisfies the {\em odd cycle condition}, which says that if $C$ and $C'$ are odd cycles of $G'$ with $V(C) \cap V(C') = \emptyset$, then there are $i \in V(C)$ and $j \in V(C')$ with $\{i, j\} \in E(G')$.  In particular, if $G$ is bipartite, then $K[G]$ is normal.  It is shown \cite[Theorem 4.2]{Cid} that, when $G$ is bipartite, one has $\reg R(I(G)) = \mat(G)$.

Our main result (Theorem \ref{main}) says that, if $R(I(G))$ is normal, then
\[
\mat(G) \leq \reg R(I(G)) \leq \mat(G) + 1.
\]
Furthermore, if $G$ has a perfect matching, then $\reg R(I(G)) =\mat(G)$.  Our proof heavily depends on the theory of normal edge polytopes created in \cite{OH} as well as on the result \cite[Theorem 3.3]{Sta} which says that if $G'$ is a subgraph of $G$ and if each of $K[G']$ and $K[G]$ is normal, then one has $\reg K[G'] \leq \reg K[G]$.  Even though $K[G]$ is normal, the above upper bound may not be satisfied (Remarks~\ref{maybe}).

On the other hand, it follows from Proposition~\ref{pure} that, for any finite simple graph $G$, one has $\indmat(G) \leq \reg R(I(G))$.  We, however, very much believe that the inequality $\mat(G) \leq \reg R(I(G))$ is valid for any finite simple graph $G$.

\section{Combinatorial pure subrings and regularity}

Let $K$ be a field and $A=K[u_1,\ldots,u_m]\subset S=K[x_1,\ldots,x_n]$ be the $K$-algebra minimally generated by the monomials $u_i=\xb^{\ab_i}$ in the polynomial ring $S$. Here for $\ab=(a_1,\ldots, a_n)$ we denote by $\xb^\ab$ the monomial $x_1^{a_1}x_2^{a_2}\cdots x_n^{a_n}$.

The $K$-algebra $A$   has a $K$-basis consisting of monomials $\xb^{\ab}$.  The set of exponents $\ab$ appearing  as exponents of the basis elements of $A$ together with addition form a positive affine semigroup  $H\subset \NN^n$ which is generated by $\ab_1,\ldots,\ab_n$.

Given an element $\ab \in H$,  we define the simplicial complex
\[
\Delta_\ab(A)=\{F\subset [n]\:\; u^F \text{ divides } \xb^{\ab} \text{ in } A\}.
\]
where $u^F=\prod_{j\in F}u_j$.

The simplicial complex $\Delta_{\ab}(A)$ is called the \emph{squarefree divisor complex} of $H$ (or of $A$) with respect to $\ab$.

We recall the following  theorem from \cite[Proposition 1.1]{BH1} (see also \cite[Theorem~3.28]{HHO})

\begin{Theorem}
\label{squarefreedivisorcomplex}
For the multigraded Betti numbers of $A$ one has
\[
\beta_{i,\ab}(A)=\dim_K  \widetilde{H}_{i-1}(\Delta_\ab; K).
\]
Here $\widetilde{H}_{i}(\Gamma; K)$ denotes the $i$th reduced simplicial homology of a simplicial complex~$\Gamma$.
\end{Theorem}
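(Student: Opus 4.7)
The plan is to realize $A$ as a quotient of the polynomial ring $R=K[y_1,\dots,y_m]$ via the surjection $\varphi\:R\to A$ with $y_i\mapsto u_i=\xb^{\ab_i}$, and to give $R$ the $\NN^n$-multigrading $\deg y_i=\ab_i$ so that $\varphi$ is multigraded. Under the standard interpretation of Betti numbers of an $\NN^n$-graded module one has $\beta_{i,\ab}(A)=\dim_K\Tor^R_i(A,K)_{\ab}$, where $K=R/(y_1,\dots,y_m)$ denotes the residue field. I would compute this $\Tor$ via the Koszul complex $\mathcal{K}_\bullet(y_1,\dots,y_m)$, which is the minimal $\NN^n$-graded free resolution of $K$ over $R$.

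Next I would analyze the complex $\mathcal{K}_\bullet(y_1,\dots,y_m)\otimes_R A$ in multidegree $\ab$. Its homological degree $i$ part has the $A$-basis $\{e_F\:\; F\subset [m],\ |F|=i\}$, and an element $e_F\otimes \xb^{\cb}$ lies in multidegree $\ab$ precisely when $u^F\cdot \xb^{\cb}=\xb^{\ab}$ in $A$, i.e.\ when $u^F$ divides $\xb^{\ab}$ in $A$. By the very definition of the squarefree divisor complex, such $F$ are exactly the $(i-1)$-dimensional faces of $\Delta_\ab(A)$. Hence, after dividing out the common multidegree $\ab$, the multidegree-$\ab$ strand of $\mathcal{K}_\bullet(y)\otimes_R A$ is canonically identified, as a sequence of $K$-vector spaces, with the augmented reduced simplicial chain complex of $\Delta_\ab$ shifted up by one in homological degree.

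It remains to check that the Koszul differential matches the reduced simplicial boundary under this identification. The Koszul map sends $e_F$ to $\sum_{j\in F}(-1)^{\sigma(F,j)}\,y_j\,e_{F\setminus\{j\}}$; under $\varphi$, multiplication by $y_j$ acts as multiplication by $u_j$ in $A$, which in the multidegree-$\ab$ strand corresponds exactly to the simplicial face map $F\mapsto F\setminus\{j\}$ in $\Delta_\ab$ with the standard alternating signs. Passing to homology yields $\Tor^R_i(A,K)_{\ab}\iso\widetilde H_{i-1}(\Delta_\ab;K)$, which is the asserted formula. The only real obstacle is the bookkeeping of ensuring that the signs in the Koszul differential agree with those in the simplicial boundary and that the multidegree shift by $\ab$ is compatible with the augmentation in degree $-1$; once this is arranged, the theorem reduces to a direct identification of two chain complexes and involves no further computation.
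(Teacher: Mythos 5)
Your argument is correct and is precisely the standard proof of this result: the paper itself gives no proof but cites \cite[Proposition 1.1]{BH1} and \cite[Theorem 3.28]{HHO}, where the theorem is established exactly as you describe, by tensoring the Koszul resolution of $K$ over the multigraded presentation ring $R=K[y_1,\dots,y_m]$ with $A$ and identifying the multidegree-$\ab$ strand with the augmented chain complex of $\Delta_\ab$ shifted by one. The index bookkeeping checks out ($|F|=i$ corresponds to an $(i-1)$-face, with $F=\emptyset$ matching the augmentation in degree $-1$), and the sign compatibility between the Koszul differential and the simplicial boundary is the routine verification you indicate.
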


We demonstrate this theorem by a simple example. Let $A=K[x_1^2,x_1x_2,x_2^2]\subset K[x_1,x_2]$.  Then $u_1=x_1^2$, $u_2=x_1x_2$ and $u_3=x_2^2$ and $H\subset \ZZ^2$ is generated by $(2,0)$, $(1,1)$ and  $(0,2)$. We want to compute $\beta_{1,(2,2)}(A)$. Squarefree divisors of $x_1^2x_2^2$ are $u_1u_3$ and $u_2$. Therefore,  the facets of $\Delta_{(2,2)}$ are $\{1,3\}$ and $\{2\}$. Thus, $\Delta_{(2,2)}$ has two connected components and so
$\dim_K  \widetilde{H}_{0}(\Delta_{(2,2)}; K)=1$. For the total standard grading this means that $\beta_{1,2}=1$. Indeed the Betti diagram of $A$ is the following:

\begin{verbatim}
        0    1
---------------
 0:     1    -
 1:     -    1
---------------
Tot:    1    1
\end{verbatim}

\begin{Proposition}
\label{pure}
Let $B\subset A$ a combinatorial pure subring of the toric $K$-algebra $A$. Then $\beta_{i,\ab}(B)= \beta_{i,\ab}(A)$ for all $\ab$ with  $\xb^\ab \in B$.
\end{Proposition}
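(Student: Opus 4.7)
The plan is to reduce everything to Theorem \ref{squarefreedivisorcomplex} by identifying the squarefree divisor complexes $\Delta_{\ab}(B)$ and $\Delta_{\ab}(A)$ whenever $\xb^{\ab} \in B$. Since $B = A \cap K[x_i : i \in T]$ for some $T \subset [n]$, the combinatorial pure hypothesis tells us that the minimal monomial generating set of $B$ is exactly the subset $\{u_j : \supp(u_j) \subset T\}$ of the minimal generators $\{u_1, \ldots, u_m\}$ of $A$. Call this index set $J \subset [m]$.

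Next, I would fix $\ab$ with $\xb^{\ab} \in B$, so $\supp(\ab) \subset T$, and prove that $\Delta_{\ab}(A) = \Delta_{\ab}(B)$ as simplicial complexes. For one inclusion, suppose $F \in \Delta_{\ab}(A)$, so $\xb^{\ab} = u^F \cdot v$ for some monomial $v \in A$. Comparing supports on both sides and using $\supp(\ab) \subset T$, I would conclude that $\supp(u_j) \subset T$ for every $j \in F$ (hence $F \subset J$) and that $\supp(v) \subset T$. Since $v \in A$ has support in $T$, it lies in $A \cap K[x_i : i \in T] = B$, which exhibits $F$ as a face of $\Delta_{\ab}(B)$. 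The reverse inclusion $\Delta_{\ab}(B) \subset \Delta_{\ab}(A)$ is automatic from $B \subset A$.

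From the equality of simplicial complexes, the reduced simplicial homology groups agree in every degree, and Theorem \ref{squarefreedivisorcomplex} then yields
\[
\beta_{i,\ab}(B) = \dim_K \widetilde{H}_{i-1}(\Delta_{\ab}(B); K) = \dim_K \widetilde{H}_{i-1}(\Delta_{\ab}(A); K) = \beta_{i,\ab}(A)
\]
for every $i$, as desired.

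The only delicate point is the support argument that pins down $F \subset J$ and $v \in B$; this is the step that genuinely uses the combinatorial purity $B = A \cap K[x_i : i \in T]$ rather than merely an arbitrary monomial $K$-subalgebra inclusion. Once that observation is in place, the conclusion is a direct translation through Theorem \ref{squarefreedivisorcomplex}.
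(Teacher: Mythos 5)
Your proposal is correct and follows essentially the same route as the paper: both prove $\Delta_{\ab}(B)=\Delta_{\ab}(A)$ for $\xb^{\ab}\in B$ via the support argument (the paper writes the cofactor as a product of generators and checks each lies in $B$, while you observe directly that a monomial of $A$ with support in $T$ lies in $A\cap K[x_i: i\in T]=B$, a slightly more streamlined version of the same step) and then conclude by Theorem \ref{squarefreedivisorcomplex}.
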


\begin{proof}
Let $A=K[u_1,\ldots,u_m]$ be minimally generated by the monomials $u_1,\ldots,u_m$ in $K[x_1,\ldots,x_n]$, and assume that
$B=A\sect K[x_i\:\; i\in T]$ for some subset $T\subset [n]$. We may further assume that $\supp(u_i)\subset T$ for $i\leq r$ and $\supp(u_i)\not \subset T$ for $i>r$. Then $B=K[u_1,\ldots,u_r]$. We claim that $\Delta_\ab(B)=\Delta_\ab(A)$. Indeed, let $F\in \Delta_\ab(B)$.  Then $\prod_{i\in F}u_i$ divides $\xb^\ab$ in $B$. Then it also divides $\xb^\ab$  in $A$. This shows that $\Delta_\ab(B)\subset \Delta_\ab(A)$. Conversely, let $F\in \Delta_\ab(A)$.  Then $\prod_{i\in F}u_i$ divides $\xb^\ab$ in $A$. Since $\supp(\xb^\ab)\in T$, it follows that all $u_i$ with $i\in F$ belong to $B$, and since $\prod_{i\in F}u_i$ divides $\xb^\ab$ in $A$, we have $\xb^{\ab}=(\prod_{i\in F}u_i) \prod_{j=1,\ldots,m}u_j^{a_j}$ for some integers $a_j\geq 0$. Since $\supp(\xb^\ab)\subset T$, the same holds true for all $u_j$ with $a_j>0$. It follows that $\prod_{j=1,\ldots,m}u_j^{a_j}\in B$, and hence $F\in \Delta_\ab(B)$.
\end{proof}

\begin{Corollary}
\label{equigenerated}
Let $A$ be a toric ring with all generators of same degree $d$, and let $B$ be a combinatorial pure subring. Then $A$ and $B$ are naturally standard graded, and we have $\beta_{i,j}(B)\leq \beta_{i,j}(A)$ for all $i$ and $j$.
\end{Corollary}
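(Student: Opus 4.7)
The plan is to reduce the standard-graded statement to the multigraded identity already established in Proposition \ref{pure}. First I would note that, since $u_1,\ldots,u_m$ all have $x$-degree $d$, setting $\deg u_i = 1$ for every $i$ gives $A$ a natural standard graded structure. As observed in the proof of Proposition \ref{pure}, $B$ is generated, as a $K$-algebra, by those $u_i$ with $\supp(u_i)\subset T$ (since a positive $K$-linear combination of monomials $u_1^{c_1}\cdots u_m^{c_m}$ lies in $K[x_i : i\in T]$ iff every contributing $u_i$ has support in $T$, as no cancellation of monomial terms occurs). Hence $B$ is generated by a subset of the $u_i$, all of degree $d$, and is likewise standard graded.

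Next I would describe the bookkeeping between standard and multigraded Betti numbers. The key observation is that if $\ab$ lies in the semigroup $H$ generated by $\ab_1,\ldots,\ab_m$ and $\ab=\sum c_k\ab_k$ is any decomposition, then $|\ab|=d\sum c_k$ because $|\ab_k|=d$ for every $k$. Consequently $\sum c_k = |\ab|/d$ is determined by $\ab$ alone and coincides with the standard degree of the corresponding monomial in $K[y_1,\ldots,y_m]$. Thus
\[
\beta_{i,j}(A)=\sum_{\ab\,:\,|\ab|=jd}\beta_{i,\ab}(A),
\]
and the analogous identity holds for $B$, with the sum restricted to $\ab$ in the semigroup $H_B$ of $B$ (equivalently, to $\ab$ with $\xb^{\ab}\in B$).

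Finally I would apply Proposition \ref{pure}: for every $\ab$ with $\xb^{\ab}\in B$ one has $\beta_{i,\ab}(B)=\beta_{i,\ab}(A)$, while $\beta_{i,\ab}(B)=0$ for $\ab\notin H_B$ (the multigraded Betti numbers of $B$ are supported on its own semigroup). Combining with the aggregation formula above gives
\[
\beta_{i,j}(B)=\sum_{\ab\in H_B,\,|\ab|=jd}\beta_{i,\ab}(A)\leq\sum_{\ab\,:\,|\ab|=jd}\beta_{i,\ab}(A)=\beta_{i,j}(A),
\]
which is the desired inequality. No serious obstacle is expected; the only delicate point is the equidegree aggregation, which uses the equigeneration hypothesis in an essential way. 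Without it, $\sum c_k$ would not be determined by $\ab$ alone and the identification of standard-graded Betti numbers with the corresponding multigraded totals would fail.
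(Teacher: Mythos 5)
Your proposal is correct and follows essentially the same route as the paper: aggregate the multigraded Betti numbers over all $\ab$ with $|\ab|=dj$, apply Proposition~\ref{pure} on the semigroup of $B$, and bound the resulting sum by the corresponding sum over the larger semigroup of $A$. The extra bookkeeping you supply (why $B$ is standard graded and why the equigeneration hypothesis makes $|\ab|/d$ well defined as the standard degree) is a sound elaboration of details the paper leaves implicit.
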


\begin{proof} For $\ab=(a_1,\ldots,a_n)\in\ZZ^n$ we set $|\ab|=\sum_{i=1,\ldots}a_i$. Let $H=\{\ab\:\; \xb^{\ab}\in B\}$ and $H'=\{\ab\:\; \xb^{\ab} \in A\}$. Then
\[
\beta_{i,j}(B) =\sum_{\ab\in H,\; |\ab|=dj}\beta_{i,\ab}(B)\leq \sum_{\ab\in H',\; |\ab|=dj}\beta_{i,\ab}(A)=\beta_{i,j}(A).
\]
\end{proof}

\begin{Corollary}
\label{fiber}
Let $G$ be a finite simple graph and $I=I(G)$ its edge ideal. Then $\beta_{i,j}(R(I))\geq \beta_{i,j}(K[G])$ for all $i$ and $j$. In particular, $\reg(R(I))\geq \reg (K[G])$.
\end{Corollary}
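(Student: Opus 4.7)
The plan is to realize $K[G]$ as a combinatorial pure subring of $R(I)$ and then invoke Corollary~\ref{equigenerated}. Recall from the introduction that
\[
R(I) \iso K[G^*],
\]
where $G^*$ is the graph on vertex set $[n+1]$ with edge set $E(G^*)=E(G)\cup\{\{i,n+1\}\:\; i\in [n]\}$. Thus $K[G^*]$ is generated by the monomials $x_ix_j$ with $\{i,j\}\in E(G)$ together with the monomials $x_ix_{n+1}$ for $i\in[n]$.

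Now take $T=[n]\subset[n+1]$. Among the generators of $K[G^*]$, exactly those of the form $x_ix_j$ with $\{i,j\}\in E(G)$ are supported on $T$, while the generators involving $x_{n+1}$ are not. Hence
\[
K[G]=K[G^*]\cap K[x_i\:\; i\in T],
\]
so $K[G]$ is a combinatorial pure subring of $R(I)\iso K[G^*]$. Moreover, every generator of $K[G^*]$ has degree $2$, so both rings are standard graded with respect to the induced grading.

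With these observations the statement follows immediately from Corollary~\ref{equigenerated}: we obtain $\beta_{i,j}(K[G])\leq\beta_{i,j}(R(I))$ for all $i$ and $j$. Taking the maximum of $j-i$ over the pairs $(i,j)$ with $\beta_{i,j}(K[G])\neq 0$ yields $\reg(K[G])\leq\reg(R(I))$.

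There is no real obstacle here; the only point requiring care is the verification that $K[G]=K[G^*]\cap K[x_i\:\; i\in T]$, which is the content of the combinatorial pure property and is transparent once the generators of $K[G^*]$ are listed.
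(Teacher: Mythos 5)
Your proposal is correct and follows exactly the paper's argument: identify $R(I)$ with $K[G^*]$, observe that $K[G]=K[G^*]\cap K[x_i\:\; i\in T]$ with $T=[n]$ is a combinatorial pure subring, and apply Corollary~\ref{equigenerated}. The paper states this more tersely, but the route is the same.
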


\begin{proof}
The Rees ring $R(I)$ is isomorphic to $K[G^*]$. Since $K[G]$ is a combinatorial pure subring of $K[G^*]$,  the assertion follows from Corollary~\ref{equigenerated}.
\end{proof}

In the next two proposition we consider the Rees algebra of a  special graph which plays a role in the next section.

\begin{Proposition}
\label{edges}
Let $G$ be the graph consisting of $m$ disjoint edges, and let $I=I(G)$ be the edge ideal of $G$. Then $\reg R(I)=0$  if $m=1$  and $\reg R(I)\geq m$ if $m>1$.
\end{Proposition}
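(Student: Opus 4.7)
The plan is to treat $m=1$ separately and, for $m\geq 2$, to apply Theorem~\ref{squarefreedivisorcomplex} to a carefully chosen multidegree of $R(I)\iso K[G^*]$.

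For $m=1$ the ideal $I=(x_1x_2)$ is principal, so $R(I)$ is a polynomial ring in three variables over $K$ and its regularity is $0$. This disposes of the first clause.

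For $m\geq 2$ I would work with the presentation of $K[G^*]$ whose generators are $u_k=x_{2k-1}x_{2k}$ for $k\in[m]$ together with $v_i=x_ix_{n+1}$ for $i\in[n]$, where $n=2m$; each generator has $\xb$-degree $2$ and standard degree $1$. The plan is to take the multidegree
$$\xb^\ab=x_1x_2\cdots x_{2m}x_{n+1}^{2m}=v_1v_2\cdots v_{2m},$$
which has standard degree $2m$, and to show that $\widetilde H_{m-1}(\Delta_\ab;K)\neq 0$. By Theorem~\ref{squarefreedivisorcomplex} this yields $\beta_{m,2m}(R(I))\neq 0$ and hence $\reg R(I)\geq 2m-m=m$.

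To analyze $\Delta_\ab$, observe that each variable $x_j$ with $j\in[2m]$ appears with exponent exactly $1$ in $\xb^\ab$, while the constraint on $x_{n+1}$ is automatic (only $2m$ generators involve $x_{n+1}$). Thus a set of generators $F$ is a face of $\Delta_\ab$ exactly when no two members of $F$ share a variable among $x_1,\dots,x_{2m}$. Since the only generators involving $x_{2k-1}$ or $x_{2k}$ are $u_k$, $v_{2k-1}$, $v_{2k}$, these constraints decouple across $k\in[m]$ and
$$\Delta_\ab=\Gamma_1*\Gamma_2*\cdots*\Gamma_m,$$
where $\Gamma_k$ is the complex on $\{u_k,v_{2k-1},v_{2k}\}$ with facets $\{u_k\}$ and $\{v_{2k-1},v_{2k}\}$. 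Each $\Gamma_k$ is the disjoint union of an isolated vertex and a $1$-simplex, hence homotopy equivalent to $S^0$; the iterated suspension formula then gives $\Delta_\ab\simeq S^{m-1}$, so $\widetilde H_{m-1}(\Delta_\ab;K)\neq 0$, as required. The main (minor) obstacle is correctly identifying $\Delta_\ab$ as this join; after that the homology calculation is immediate from the standard fact that the join of $m$ copies of $S^0$ is $S^{m-1}$.
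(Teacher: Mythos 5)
Your $m=1$ case is fine, and your general strategy (exhibit a multidegree $\ab$ with $\widetilde H_{i-1}(\Delta_\ab;K)\neq 0$ and read off a lower bound for the regularity via Theorem~\ref{squarefreedivisorcomplex}) is exactly the paper's. But your computation of $\Delta_\ab$ is wrong, and the error kills the proof. The squarefree divisor complex is defined by divisibility \emph{in $A$}: a set $F$ is a face iff $\xb^\ab/u^F$ is again a monomial of $A=K[G^*]$, i.e.\ again a product of the generators $u_k,v_i$. Your criterion --- ``no two members of $F$ share a variable among $x_1,\dots,x_{2m}$'' --- only tests divisibility in the polynomial ring $S$, and the $x_{2m+1}$-constraint you call ``automatic'' is in fact the decisive one. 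For your choice $\xb^\ab=v_1\cdots v_{2m}=x_1\cdots x_{2m}x_{2m+1}^{2m}$, a face $F$ must have its quotient expressible as a product of $2m-|F|$ generators; since each generator contributes at most $1$ to the $x_{2m+1}$-degree and the quotient has $x_{2m+1}$-degree $2m-\#\{v_i\in F\}$, one needs $2m-\#\{v_i\in F\}\leq 2m-|F|$, i.e.\ $F$ may contain no $u_k$ at all. (Concretely, for $m=2$ the singleton $\{u_1\}$ is not a face: $x_3x_4x_5^4$ cannot be a product of two generators, since four factors involving $x_5$ would be required.) Hence your $\Delta_\ab$ is the full simplex on $\{v_1,\dots,v_{2m}\}$, which is contractible; all its reduced homology vanishes and $\beta_{m,\ab}(R(I))=0$. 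The claimed join decomposition $\Gamma_1*\cdots*\Gamma_m$ is therefore false, and your argument proves nothing for $m\geq 2$.

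The repair is to pick a multidegree whose $x_{2m+1}$-exponent is small enough that distinct factorizations genuinely compete. The paper takes $\xb^\ab=u_1v_3v_4\cdots v_{2m}=x_1\cdots x_{2m}x_{2m+1}^{2m-2}$, of standard degree $2m-1$; there the facets of $\Delta_\ab$ are the $m$ sets $F_i=\{u_i\}\cup\{v_1,\dots,v_{2m}\}\setminus\{v_{2i-1},v_{2i}\}$, the complex is homotopy equivalent to an $(m-2)$-sphere, and $\beta_{m-1,2m-1}(R(I))\neq 0$ yields $\reg R(I)\geq(2m-1)-(m-1)=m$.
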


\begin{proof}
If $m=1$, then $E(G)=\{1,2\}$ and $E(G^*)=\{\{1,2\},\{1,3\},\{2,3\}\}$, and hence $G^*$ is a $3$-cycle. Hence $R(I)$ which is isomorphic to $K[G^*]$ is a polynomial ring, since the genertors of $K[G^*]$ are algebraically independent.

Now let $m>1$, and for $i=1,\ldots,m$ let $e_i=\{2i-1,2i\}$  be the edges of $G$. Then $G^*$ has the additional edges $f_i=\{i,2m+1\}$  $(i=1,\ldots, 2m)$. Let the  $u_i =x_{2i-1}x_{2i}$ be the monomial generators of $K[G^*]$ corresponding to the edges $e_i$ and  the $v_i=x_ix_{2m+1}$ the generators  of $K[G^*]$ corresponding to the edges $f_i$. In $K[G^*]$ we consider the element $\xb^{\ab}=u_1\prod_{i=3,\ldots, 2m}v_i=x_1x_2\ldots x_{2m}x_{m+1}^{2m-2}$, and claim that
\[
\dim_K\widetilde{H}_{m-2}(\Delta_\ab)\neq 0.
\]
Thus, by Theorem~\ref{squarefreedivisorcomplex} the claim implies that $\beta_{{m-1},\ab}(R(I))\ne 0$.  Since $|\ab|=2(2m-1)$,  it follows that
$\beta_{{m-1},2m-1}(R(I))\geq 1$, and this implies that $\reg(R(I))\geq m$.

\medskip
Proof of the claim: we first notice that $\Delta_\ab$  has the facets
\[
F_i=\{u_i, v_1,\ldots, v_{2m}\}\setminus \{v_{2i-1},v_{2i}\} \quad \text{for} \quad i=1,\ldots, m.
\]
where   the vertices of $\Delta_\ab$ are identified with the monomials $u_i$ and $v_j$.

Indeed, since $u_i\prod_{j=1,\ldots,2m \atop j\neq 2i-1, 2i}v_j=\xb^\ab$ for $i=1,\ldots, m$ it follows that $F_1, \ldots, F_m$ are facets of $\Delta_\ab$.

In order to prove that these are all the facets of $\Delta_\ab$, we show that if $F$ is a face of $\Delta_\ab$, then $F\subset F_i$ for some $i$. Suppose $\{e_i,e_j\}\subset F$ for some $i\neq j$. By symmetry we may assume that $\{e_1,e_2\}\subset F$. Then $\xb^{\ab}/x_1x_2x_3x_4=x_5\cdots x_{2m}x_{m+1}^{2m-2}\in A$ which is impossible because  $x_5\cdots x_{2m}x_{m+1}^{2m-2}$ must contain $2m-2$ factor of the $f_j$. Next suppose that $F\sect \{e_1,\ldots,e_m\}=\emptyset$. If there exists $i$ such that $F\sect \{f_{2i-1},f_{2i}\}=\emptyset$, then $F\subset F_i$. Otherwise, $F\sect \{f_{2i-1}, f_{2i}\}\neq \emptyset$ for all $i$, and by symmetry we may assume that  $f_{2i-1}\in F$ for $i=1,\ldots, m$. Since $u^F$ divides $\xb^{\ab}$ it follows that $|F|\leq 2m-1$. By symmetry we may assume that $f_2$ and $f_4$  do not belong to $F$. Then $\xb^{\ab}/u^F= x_2x_4\prod_{i\;  f_{2i}\not\in F}x_{2i}\not \in A$. Hence, $F\not \in \Delta_{\ab}$. It remains to consider the case that $F\sect\{e_1,\ldots, e_m\}=\{e_i\}$  for some $i$. By symmetry we may assume that $i=1$. Suppose that $f_1\in F$.  Then $u_1f_1=x_1^2x_2x_{m+1}$ divides $\xb^{\ab}$, a contradiction. Thus $f_1\not\in F$. Similarly, $f_2\not \in F$. This shows that $F\subset F_1$.

Next we notice  that geometric realization $|\Delta_\ab|$ of $\Delta_\ab$ is homotopic to the geometric realization of the simplicial complex $\Gamma$ whose facets are
\[
G_i=\{f_1,f_3,\ldots, f_{2m-1}\}\setminus \{f_{2i-1}\}, \quad i=1,\ldots,m.
\]
We choose the standard geometric realization by  identifying the vertices \[e_1,\ldots,e_m, f_1,\ldots,f_{2m}\] of $\Delta_\ab$ with the standard unit vectors in $\RR^{3m}$.

Indeed, the homotopy is given   by the affine maps $\varphi_t$ induced  by  $e_i\mapsto e_i'=te_i+(1-t)f_1$ if $i>1$ and  $e_1\mapsto e_1'=te_1+(t-1)f_3$. Moreover, $f_i\mapsto f_i'=f_i$ if $i$ is odd  and $f_i\mapsto f_i'=tf_{i}+(1-t)f_{i-1}$ if $i$ is even. Here $0\leq t\leq 1$.
We have  $\varphi_1=\id$ and $|\Gamma|= \varphi_0(|\Delta_\ab|)$, as desired.

Now since $|\Gamma|$ is homotopic to $|\Delta_\ab|$, we see that  $\widetilde{H}_{m-2}(\Gamma)\iso \widetilde{H}_{m-2}(\Delta_\ab)$. Observe  that  $|\Gamma|$ is homotopic to an $(m-2)$-sphere, so that $\widetilde{H}_{m-2}(\Gamma)\neq 0$. This concludes the proof of the proposition.
\end{proof}

\begin{Remarks}
\label{maybe}
{\em Let $G$ be the sum of the graphs $G_1$ and $G_2$. Assume that $G$ has no isolated vertices and $G_1$ or $G_2$ has at least 2 edges. Considering several examples we come up with the following question: Is it true that $\reg R(I(G_1+G_2))= \reg R(I(G_1))+\reg R(I(G_2))$?

 If this question has a positive answer, then Proposition~\ref{edges} is just a very special case of this statement. Of course it is also a simple consequence of the theorem of Cid-Ruiz \cite{Cid}. However, in order to keep this paper as self-contained as possible and also to demonstrate the use of  squarefree divisor complexes,  we included  Proposition~\ref{edges} to this paper.

With CoCoA we considered  the case that $G$ is the sum of two  $3$-cycles, and found $\reg R(I(G))=4$, as expected. In the next section we show  that $\reg R(I(G))\leq \mat(G)+1$ if $R(IG))$ is normal. In our  example with the two $3$-cycles, $R(I(G))$ is not normal and $\mat(G)=2$. Therefore,  the inequality $\reg R(I(G))\leq \mat(G)+1$ is in general not valid if $\reg R(I(G))$ is not normal.

If our question has a positive answer, then one has $\reg R(I(G))=2m$ if $G$ is the sum of $m$ $3$-cycles. On the other hand, for this graph,  $\mat(G)=m$. This then gives a family of graphs for which $\reg R(I(G))-\mat(G)$ can be any positive integer. }
\end{Remarks}

\section{Bounds for the regularity of the Rees algebra of an edge ideal}

Let, as before, $G$ be a finite simple graph on the vertex $V(G) = [n]$ and $S = K[x_1, \ldots, x_n]$ the polynomial ring in $n$ variables over a field $K$.  Let $\P_G \subset \RR^n$ be the {\em edge polytope} of $G$ which is the convex hull of $\{ \, \eb_i + \eb_j \, : \, \{i, j\} \in E(G) \, \}$, where $\eb_i$ is the $i$th unit coordinate vector of $\RR^n$.  Let $A_{\P_G}$ denote the {\em Ehrhart ring} of $G$, which is the toric ring in the $n + 1$ variables $x_1, \ldots, x_n, t$ whose $K$-basis consists of those monomials $x_1^{a_1} \ldots x_1^{a_1} t^q$, where $1 \leq q \in \ZZ$, with $(a_1, \ldots, a_n) \in q \P_G \cap \ZZ_{\geq 0}^n$.  We refer the reader to \cite{OH} for basic materials and fundamental results on edge polytopes and their Ehrhart rings.  The Ehrhart ring $A_{\P_G}$ is normal and its canonical module is spanned by those monomials $x_1^{a_1} \ldots x_1^{a_1} t^q$ with $(a_1, \ldots, a_n) \in q (\P_G \setminus \partial \P_G) \cap \ZZ_{\geq 0}^n$.  The edge ring $K[G]$ is normal if and only if $A_{\P_G}$ is standard grading, i.e., $A_{\P_G}$ is generated by those monomials $x_1^{a_1} \ldots x_1^{a_1} t$ with $(a_1, \ldots, a_n) \in \P_G \cap \ZZ_{\geq 0}^n$ as an algebra over $K$.  Thus in particular $K[G]$ is normal if and only if $K[G]$ is isomorphic to $A_{\P_G}$.  Furthermore, it is shown \cite[Corollary 2.3]{OH} that $K[G]$ is normal if and only if each connected component $G'$ of $G$ satisfies the {\em odd cycle condition}, that is, if $C$ and $C'$ are odd cycles of $G'$ with $V(C) \cap V(C') = \emptyset$, then there exist $i \in V(C)$ and $j \in V(C')$ with $\{i, j\} \in E(G')$.  In particular, if $G$ is bipartite, then $K[G]$ is normal.

We say that a finite subset $L \subset E(G)$ is an {\em edge cover} of $G$ if $\cup_{e \in L} = [n]$.  Let $\mu(G)$ denote the minimal cardinality of edge covers of $G$.

\begin{Lemma}
\label{edge}
Let $G$ be a finite simple graph on $V(G) = [n]$.  Then
\[
\mu(G) + \mat(G) = n.
\]
\end{Lemma}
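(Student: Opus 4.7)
This is Gallai's classical identity, valid whenever $G$ has no isolated vertices (which is implicit in the hypothesis that $\mu(G)$ is finite). The plan is to prove the two inequalities $\mu(G) + \mat(G) \leq n$ and $\mu(G) + \mat(G) \geq n$ separately by constructing an edge cover from a matching and, conversely, a matching from an edge cover.

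For the upper bound I would take a maximum matching $M$ of cardinality $\mat(G)$. It saturates exactly $2\mat(G)$ vertices, and for each of the remaining $n - 2\mat(G)$ unsaturated vertices I pick an arbitrary incident edge of $G$. The resulting collection of edges is an edge cover of $G$ whose cardinality is at most $\mat(G) + (n - 2\mat(G)) = n - \mat(G)$, so $\mu(G) \leq n - \mat(G)$.

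For the lower bound I would start from a minimum edge cover $L$ of cardinality $\mu(G)$ and analyse the spanning subgraph $H = ([n], L)$. The key structural claim is that every connected component of $H$ is a star, i.e., has diameter at most $2$: if some component contained a path $v_0 v_1 v_2 v_3$ of length three, then removing the middle edge $\{v_1, v_2\}$ from $L$ would leave an edge cover, since $v_1$ and $v_2$ remain covered by the flanking edges, contradicting the minimality of $L$; in particular no component can contain a cycle. If $H$ has $k$ connected components with $n_1, \ldots, n_k$ vertices, then each such star contributes $n_i - 1$ edges, so $|L| = \sum_{i=1}^{k}(n_i - 1) = n - k$. Selecting one edge from each of the $k$ stars yields a matching of $G$ of cardinality $k$, hence $\mat(G) \geq k = n - \mu(G)$.

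Combining the two inequalities produces $\mu(G) + \mat(G) = n$. The only genuinely nontrivial step is the star claim for the minimum edge cover, for which one must use minimality of $L$ to rule out both paths of length at least three and cycles inside any component of $H$.
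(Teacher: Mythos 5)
Your proof is correct and takes essentially the same route as the paper: one inequality by padding a maximum matching with one edge per unsaturated vertex, the other by extracting a matching from a minimum edge cover $L$. The only real difference is in the second step, where you establish the full star-forest structure of $L$ and count $|L| = n - k$, while the paper skips the structure theory by taking a maximal matching $M \subset L$ and counting covered vertices as $2|M| + (|L| - |M|) = n$ --- the same count, since your $k$ equals that $|M|$. One small point to tighten: a triangle component is not excluded by the path-of-length-three case (its closed walk repeats a vertex), so the cycle exclusion you flag at the end does need the edge-removal argument applied directly to a triangle edge, though this is immediate.
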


\begin{proof}
Let $L \subset E(G)$ be an edge cover with $|L| = \mu(G)$ and $M'$ a matching of $G$ which is maximal among those matchings $M$ with $M \subset L$.  Then, for each edge $e \in L \setminus M$, there is $f \in M$ with $e \cap f \neq \emptyset$. Hence $2|M| + (\mu(G) - |M|) = n$.  Thus $\mat(G) \geq |M| = n - \mu(G)$.  Hence $\mu(G) \geq n - \mat(G)$.  However, clearly, one has $\mu(G) \leq n - \mat(G)$.  Thus $\mu(G) = n - \mat(G)$, as desired.
\end{proof}

In order to achieve the proof of Theorem \ref{main}, the information of the facets of edge polytopes is indispensable.  Let $G$ be a connected non-bipartite graph on $V(G) = [n]$.  We say that $i \in[n]$ is {\em regular} (\cite[p.~414]{OH}) if each connected component of the induced subgraph $G_{[n] \setminus \{ i \}}$ is non-bipartite.  When $i \in [n]$ is regular, the hyperplane $\Hc_i$ of $\RR^n$ defined by the equation $z_i = 0$ is a supporting hyperplane of $\P_G$ and $\Hc_i \cap \P_G$ is a facet of $\P_G$ (\cite[Theorem 1.7]{OH}).  A nonempty subset $T \subset [n]$ is called {\em independent} if $\{i, j\} \not\in E(G)$ for $i$ and $j$ belonging to $T$ with $i \neq j$.  When $T$ is independent, we write $N_G(T) \subset [n]$ for the set of those vertices $i \in [n]$ for which there is an edge $e \in E(G)$ with $i \in e$ and $e \cap T \neq \emptyset$.  When $T$ is independent, we write $T^\sharp$ for the bipartite graph on the vertex set $T \cup N_G(T)$ whose edges are those $\{i, j\} \in E(G)$ with $i \in T$ and $j \in N_G(T)$.
When $T$ is independent, we say that $T$ is {\em fundamental} (\cite[p.~415]{OH}) if (i) $T^\sharp$ is connected and (ii) either $T \cup N_G(T) = [n]$ or each connected component of the induced subgraph $G_{[n] \setminus (T \cup N_G(T))}$ is non-bipartite.  When $T$ is fundamental, the hyperplane $\Hc_T$ of $\RR^n$ defined by the equation $\sum_{i \in T} z_i = \sum_{j \in N_G(T)} z_j$ is a supporting hyperplane of $\P_G$ and $\Hc_T \cap \P_G$ is a facet of $\P_G$ (\cite[Theorem 1.7]{OH}).

We now come to the main result of the present paper.  It is known \cite[Theorem 4.2]{Cid} that, when $G$ is bipartite, one has $\reg R(I(G)) = \mat(G)$. % We also give a combinatorial proof of the fact.

\begin{Theorem}
\label{main}
{\rm (a)}  Let $G$ be a finite simple graph with $G_1, \ldots, G_c$ its connected components.  Then the Rees algebra $R(I(G)) = K[G^*]$ is normal if and only if each $K[G_i]$ is normal and at most one of $G_1, \ldots, G_c$ is non-bipartite.

{\rm (b)}  Let $|E(G)| \geq 2$. Suppose that $R(I(G))$ is normal.  Then
\[
\mat(G) \leq \reg R(I(G)) \leq \mat(G) + 1.
\]
% Furthermore, if $G$ is bipartite, then
% \[
% \reg R(I(G)) = \mat(G).
% \]
\end{Theorem}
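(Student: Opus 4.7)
The plan is to prove (a) by analyzing the odd cycle condition on $G^*$, and (b) by combining Sta's subgraph inequality (lower bound) with a Danilov--Stanley computation of the $a$-invariant of $K[G^*] = A_{\P_{G^*}}$ (upper bound).

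For (a), I would use \cite[Corollary 2.3]{OH}: $K[G^*]$ is normal iff the connected graph $G^*$ satisfies the odd cycle condition. Every odd cycle of $G^*$ either lies entirely in $G$ (when it avoids $n+1$) or passes through $n+1$. For two vertex-disjoint odd cycles $C, C'$, if one of them uses $n+1$ then they are automatically joined by an edge, since $n+1$ is adjacent in $G^*$ to every vertex of $G$. The real obstructions therefore come from pairs both sitting in $G$: two such cycles inside a common component $G_i$ require the odd cycle condition in $G_i$ (equivalently, $K[G_i]$ normal), while two in distinct components are joined by no edge of $G^*$ at all (the only cross edges pass through $n+1 \notin C \cup C'$), forcing at most one of the $G_i$ to be non-bipartite.

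For the lower bound in (b), I would pick a maximum matching $M$ of $G$ and form $G_0 := (V(G), M)$, a bipartite graph. By (a), $K[G_0^*]$ is normal; by \cite[Theorem 4.2]{Cid}, $\reg K[G_0^*] = \mat(G_0) = \mat(G)$. Since $G_0^*$ is a subgraph of $G^*$ and both edge rings are normal, \cite[Theorem 3.3]{Sta} yields $\reg K[G_0^*] \leq \reg K[G^*]$, giving $\mat(G) \leq \reg R(I(G))$.

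For the upper bound, normality makes $K[G^*] = A_{\P_{G^*}}$ Cohen--Macaulay of dimension $n+1$ (because $G^*$ is connected and non-bipartite), so by Danilov--Stanley $\reg K[G^*] = (n+1) - q_0$, where $q_0$ is the smallest positive integer for which $q_0 \P_{G^*}$ has a lattice point in its relative interior. By Lemma~\ref{edge}, it suffices to show $q_0 \geq \mu(G)$. Given such an interior lattice point $z$ of $q \P_{G^*}$, I would first argue $z_v \geq 1$ for every $v \in [n+1]$: the intersection $\{z_v = 0\} \cap \P_{G^*}$ is a proper face of $\P_{G^*}$ (a vertex of $\P_{G^*}$ coming from an edge at $v$ has $z_v > 0$), so it lies in the boundary and hence cannot contain $z$. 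Normality of the lattice polytope $\P_{G^*}$ then provides a decomposition $z = \sum_{e \in E(G^*)} m_e \mathbf{1}_e$ with $m_e \in \ZZ_{\geq 0}$ and $\sum_e m_e = q$; the support $\tilde F := \{e : m_e \geq 1\}$ is an edge cover of $G^*$ satisfying $|\tilde F| \leq q$, whence $q \geq \mu(G^*) \geq \mu(G)$, the last inequality from the elementary bound $\mat(G^*) \leq \mat(G) + 1$. The main obstacle is to execute the proper-face step uniformly in $v$: when $v = n+1$ and $G$ has a bipartite component, the hyperplane $\{z_{n+1} = 0\}$ does not cut out a facet of $\P_{G^*}$, but any proper face (being contained in some facet) still lies on the boundary, which is all that is needed.
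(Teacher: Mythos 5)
Your part (a) is the paper's argument written out in slightly more detail, and it is correct. For part (b) the overall skeleton also matches the paper --- Stanley's monotonicity \cite[Theorem 3.3]{Sta} for the lower bound, and Danilov--Stanley together with the quantity $q_0=\min\{q\geq 1 : q(\P_{G^*}\setminus\partial\P_{G^*})\cap\ZZ^{n+1}\neq\emptyset\}$ and Lemma \ref{edge} for the upper bound --- but you differ in two places worth recording. For the lower bound the paper deliberately avoids \cite{Cid}: it takes the subgraph $H$ consisting of the $m$ matching edges alone (no isolated vertices) and proves $\reg K[H^*]\geq m$ by hand via squarefree divisor complexes (Proposition \ref{edges}), precisely to stay self-contained. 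Your appeal to Cid-Ruiz is a legitimate shortcut when $m\geq 2$, since the isolated vertices of your $G_0$ only produce degree-one vertices of $G_0^*$ and hence polynomial extensions that do not change the regularity. For the upper bound, your justification that an interior lattice point of $q\P_{G^*}$ has all coordinates positive --- namely that $\{z_v=0\}\cap\P_{G^*}$ is a proper face and every proper face lies in the relative boundary --- is cleaner and more uniform than the paper's, which invokes regular vertices and fundamental independent sets from \cite[Theorem 1.7]{OH} and is therefore forced to split into Case I ($G$ connected non-bipartite) and Case II ($G$ disconnected) to handle the vertex $n+1$. Your version removes that case distinction and also covers bipartite $G$ without a separate appeal to \cite{Cid}.

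The one genuine gap is the case $\mat(G)=1$ of the lower bound. There $G_0=(V(G),M)$ is a single edge together with at least one isolated vertex, and $K[G_0^*]$ is generated by $n+1$ algebraically independent monomials (a triangle with pendant edges attached at the cone vertex), i.e., it is a polynomial ring of regularity $0$, not $1=\mat(G_0)$. So the equality $\reg K[G_0^*]=\mat(G_0)$ that you extract from \cite[Theorem 4.2]{Cid} fails for this subgraph, and your argument only yields $\reg R(I(G))\geq 0$. The paper treats this case separately: since $|E(G)|\geq 2$, the toric ring $K[G^*]$ has at least $n+2$ minimal generators but Krull dimension $n+1$, hence is not a polynomial ring and has regularity at least $1=\mat(G)$. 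With that one observation added, your proof is complete.
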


\begin{proof}
(a) The  ``If'' part is clear.  We show  now the ``Only If'' part.  If, say, $G_1$ fails to satisfy the odd cycle condition, then $G^*$ also fails to satisfy the odd cycle condition.  If, say, $G_1$ and $G_2$ are non-bipartite and if $C_i$ is an odd cycle of $G_i$ for $i \in \{1, 2\}$, then, even though $G^*$ is connected, there is no edge $e \in E(G^*)$ with $e \cap V(C_i) \neq \emptyset$ for $i \in \{1, 2\}$, as desired.

\medskip

(b) We first prove the lower bound. In case   $\mat(G)=1$, the graph $G$ is a star graph which by assumption has at least 2 edges.  Then $R(I(G))$ is not polynomial and therefore $\reg R(I(G))\geq  1= \mat(G)$.

Now let $m = \mat(G)$ and assume that $m \geq 2$ and $\{\{i_1,j_1\}, \ldots, \{i_m,j_m\}\}$ a matching of $G$.  Write $H$ for the subgraph of $G$ with $E(H) = \{\{i_1,j_1\}, \ldots, \{i_m,j_m\}\}$.  Since $R(I(H)) = K[H^*]$ is normal with $\reg R(I(H)) = m$ (Proposition \ref{edges}) and since $H^*$ is a subgraph of $G^*$, it follows from \cite[Theorem 3.3]{Sta} that
\[
\mat(G) = \reg R(I(H)) \leq \reg R(I(G)).
\]

We now prove the upper bound. The highlight of the proof is to estimate the positive integer
\[
q_0 = \min \{ \, q \geq 1 \, : \, q (\P_{G^*} \setminus \partial \P_{G^*}) \cap \ZZ^{n+1} \neq \emptyset \, \}.
\]

\medskip

(Case I)
Suppose that $G$ is connected and non-bipartite.  Each vertex $i \in [n + 1]$ of $\P_{G^*}$ is regular.  It then follows from \cite[Theorem 1.7]{OH} that, if $(a_1, \ldots, a_n, a_{n+1})$ belongs to $q (\P_{G^*} \setminus \partial \P_{G^*}) \cap \ZZ^{n+1}$, then each $a_i > 0$.  By using Lemma \ref{edge} one has
\[
q_0 \geq \mu(G) = (n + 1) - \mat(G^*) \geq (n + 1) - (\mat(G) + 1).
\]
Since $\dim \P_{G^*}  = n$, it follows that
\[
\reg R(I(G)) = (n + 1) - q_0 \leq \mat(G) + 1,
\]
as desired.

\medskip

% Suppose that $G$ is bipartite with the decomposition $V(G) = V_1 \cup V_2$, where $V_1 = \{1, \ldots, r\}$ and $V_2 = \{r + 1, \ldots, n\}$ with $r \leq n - r$.  Then $\mat(G) \leq r$.  Each $i \in [n]$ is regular.  However, since $G$ is bipartite, the vertex $n + 1$ cannot be regular.  Again, by \cite[Theorem 1.7]{OH}, if $(a_1, \ldots, a_n, a_{n+1}) \in q (\P_{G^*} \setminus \partial \P_{G^*}) \cap \ZZ^{n+1}$, then $a_i > 0$ for $1 \leq i \leq n$.  Thus Lemma \ref{edge} says that $q_0 \geq n - r$.  Let $\ab = (a_1, \ldots, a_n, a_{n+1})$ belong to $(n - r) (\P_{G^*} \setminus \partial \P_{G^*}) \cap \ZZ^{n+1}$.  It then follows that $a_{n+1} = 0$ and $(\sum_{i=1}^{r} a_i) + a_n = \sum_{j=r+1}^{n+1} a_j$.  Since the subset $V_2$ of $V(G^*) = [n+1]$ is fundamental in $G_{r, n-r}^*$, it follows from \cite[Theorem 1.7]{OH} that $\ab$ cannot belong to $(n - r) (\P_{G^*} \setminus \partial \P_{G^*}) \cap \ZZ^{n+1}$.  In other words, $(n - r) (\P_{G^*} \setminus \partial \P_{G^*}) \cap \ZZ^{n+1} = \emptyset$.  Hence $q_0 \geq n - r + 1$.  Hence
% \[
% \reg R(I(G_{r, n-r}^*)) = (n + 1) - q_0 \leq r \leq \mat(G),
% \]
% as desired.

% \medskip

(Case II)
Suppose that $G$ is disconnected and non-bipartite.  Let $G_1, \ldots G_c$ be the connected components of $G$, where $G_1$ is non-bipartite and where each of $G_2, \ldots, G_c$ is bipartite.  Let $V(G_i) = V_i \cup V_i'$ be the decomposition of $V(G_i)$.  Let $(a_1, \ldots, a_n, a_{n+1})$ belong to $q (\P_{G^*} \setminus \partial \P_{G^*}) \cap \ZZ^{n+1}$.  Since each $i \in [n]$ is regular, one has $a_i > 0$ for each $i \in [n]$.  Since $T = V_2 \cup \cdots \cup V_c$ is fundamental with $N_G(T) = V'_2 \cup \cdots \cup V'_c \cup \{ n + 1 \}$, it follows that $a_{n+1} > 0$.  Hence, as in (Case I), one has $\reg R(I(G)) \leq \mat(G) + 1$, as required.
\end{proof}

In the proof of (b) of Theorem \ref{main}, one has $\mat(G^*) = \mat(G)$ if and only if $G$ has a perfect matching.  As a result,

\begin{Corollary}
\label{perfect}
If $G$ has a perfect matching and if $R(I(G))$ is normal, then
\[
\reg R(I(G)) = \mat(G).
\]
\end{Corollary}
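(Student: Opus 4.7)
The plan is to refine the upper bound computation in the proof of Theorem~\ref{main}(b), using the observation that the hypothesis of a perfect matching forces the sharper equality $\mat(G^*) = \mat(G)$, which in turn tightens the resulting bound by one.

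First I would verify this equality. If $M$ is a perfect matching of $G$, then $n := |V(G)| = 2\,\mat(G)$, and any matching of $G^*$ covers at most $n+1$ vertices and hence contains at most $\lfloor (n+1)/2\rfloor = n/2$ edges, giving $\mat(G^*)\leq \mat(G)$; the reverse inequality is immediate since $M\subset E(G^*)$ is itself a matching of $G^*$.

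Next I would rerun the upper bound step of Theorem~\ref{main}(b). Both Case I and Case II of that proof yield
\[
q_0 \;\geq\; \mu(G^*) \;=\; (n+1) - \mat(G^*),
\]
by showing that every lattice point of $q_0(\P_{G^*}\setminus \partial\P_{G^*})\cap \ZZ^{n+1}$ has all coordinates strictly positive (combining regularity of each vertex of $G^*$ with the fundamental subset $T = V_2\cup\cdots\cup V_c$ in the disconnected case) and then invoking Lemma~\ref{edge}. Substituting the sharper value $\mat(G^*) = \mat(G)$ yields $q_0 \geq (n+1) - \mat(G)$, and the Danilov--Stanley description of the canonical module of the normal toric ring $A_{\P_{G^*}}\iso R(I(G))$ (of Krull dimension $n+1$) then gives $\reg R(I(G)) = (n+1) - q_0 \leq \mat(G)$. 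Together with the lower bound $\reg R(I(G))\geq \mat(G)$ of Theorem~\ref{main}(b), this forces equality. The bipartite case is already contained in the Cid-Ruiz formula cited before Theorem~\ref{main}.

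There is no substantive obstacle here, since the corollary is really just a numerical tightening of the argument already carried out for Theorem~\ref{main}(b); the only thing worth double checking is the elementary identity $\mat(G^*) = \mat(G)$, handled by the counting argument above.
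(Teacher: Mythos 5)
Your proposal is correct and follows essentially the same route as the paper: the paper's entire argument for this corollary is the observation that $\mat(G^*) = \mat(G)$ exactly when $G$ has a perfect matching, which tightens the inequality $q_0 \geq (n+1) - \mat(G^*)$ from the proof of Theorem~\ref{main}(b) to give $\reg R(I(G)) \leq \mat(G)$, matching the lower bound. Your explicit counting verification of $\mat(G^*) = \mat(G)$ and your handling of the bipartite case via Cid-Ruiz are sound elaborations of what the paper leaves implicit.
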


The converse of Corollary \ref{perfect} is false.  In fact,

\begin{Example}
{\em
Let $G$ be a finite simple connected non-bipartite graph on $[6]$ whose edges are
\[
\{1,2\}, \{2,3\}, \{3,4\}, \{1,4\}, \{2,4\}, \{2,5\}, \{2,6\}.
\]
Even though $G$ has no perfect matching, one has $\reg R(I(G)) = \mat(G) = 2$.  The lattice points $(a_1, \ldots, a_7)$ belonging to $4 \P_{G^*} \cap \ZZ^{7}$ with each $a_i > 0$ are
\[
(1,1,1,1,1,1,2), \, (1,1,1,2,1,1,1).
\]
Since $T = \{1,3,5,6\}$ is fundamental, neither of these lattice points cannot belong to $4(\P_{G^*} \setminus \partial \P_{G^*})$.  Thus $q_0 \geq 5$.
}
\end{Example}

It would, of course, be of interest to characterize finite simple graphs $G$ with $\reg R(I(G)) = \mat(G)$. On the other hand, if $G$ is a $5$-cycle, then $R(I(G))$ is normal, $\reg R(I(G))=3$ and $\mat(G)=2$.

\medskip
When $K[G]$ is non-normal, instead of \cite[Theorem 3.3]{Sta}, we can enjoy the merit of combinatorial pure subrings (Corollary \ref{equigenerated}).

\begin{Proposition}
\label{cps}
Let $G$ be an arbitrary finite simple graph.  Then one has
\[
\indmat(G) \leq \reg R(I(G)).
\]
\end{Proposition}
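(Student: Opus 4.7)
The plan is to combine Corollary \ref{equigenerated} with Proposition \ref{edges}, as suggested by the paragraph preceding the statement. The aim is to realise $R(I(H))$ as a combinatorial pure subring of $R(I(G))$, where $H$ is a disjoint union of edges obtained from a maximum induced matching of $G$.

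First I would set $m = \indmat(G)$ and fix an induced matching $M = \{e_1,\ldots,e_m\} \subset E(G)$ of this size. I would then let $H$ be the subgraph of $G$ induced on $V(M) := \bigcup_{i} e_i$. The definition of an induced matching immediately forces $E(H) = M$: any extra edge $\{u,v\}$ of $G$ with $u,v \in V(M)$ would have its two endpoints in distinct $e_i, e_j$ (since the $e_i$ are already edges and the matching is disjoint), contradicting the induced-matching property. So $H$ is precisely a disjoint union of $m$ edges, i.e.\ the situation governed by Proposition \ref{edges}.

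The heart of the argument lies in exhibiting $K[H^*]$ as a combinatorial pure subring of $K[G^*]$. I would set $T = V(M) \cup \{n+1\} \subset V(G^*) = [n+1]$ and form
\[
B = K[G^*] \cap K[x_i : i \in T].
\]
The check to carry out here is that $B \cong K[H^*]$. By the proof of Proposition \ref{pure}, $B$ equals the $K$-subalgebra of $K[G^*]$ generated by exactly those edge-monomials of $K[G^*]$ whose support lies in $T$. These split into two families: the $x_i x_j$ with $\{i,j\} \in E(G)$ and $\{i,j\} \subset V(M)$, which because $H$ is an induced subgraph and $E(H) = M$ coincide with the edge generators of $K[H]$; and the $x_i x_{n+1}$ with $i \in V(M) = V(H)$. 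Identifying the added vertex of $H^*$ with $n+1$ turns exactly these two families into the generators of $K[H^*]$, giving $B \cong K[H^*] = R(I(H))$.

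With the identification in hand, Corollary \ref{equigenerated} yields $\reg R(I(H)) = \reg B \leq \reg K[G^*] = \reg R(I(G))$, while Proposition \ref{edges} yields $\reg R(I(H)) \geq m$ whenever $m \geq 2$, giving the desired inequality. The only minor wrinkle is the case $m = 1$, in which Proposition \ref{edges} merely gives $\reg R(I(H)) = 0$; here one argues separately that if $G$ has at least two edges, then $|E(G^*)| = |E(G)| + n \geq n + 2 > \dim K[G^*]$, so $K[G^*]$ is not a polynomial ring and $\reg R(I(G)) \geq 1$. I do not expect any step to present a serious obstacle: the one delicate point is the identification of $B$ with $K[H^*]$ in the third paragraph, and this is forced by the induced-matching hypothesis at just the right strength.
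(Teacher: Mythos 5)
Your proof is correct and is essentially the paper's intended argument: the paper gives no explicit proof of Proposition~\ref{cps}, but the surrounding text (and the introduction) indicate exactly your route, namely realizing $K[H^*]$ for an induced matching $H$ as a combinatorial pure subring of $K[G^*]$ via $T=V(M)\cup\{n+1\}$ and combining Corollary~\ref{equigenerated} with Proposition~\ref{edges}. Your separate treatment of $m=1$ under the hypothesis $|E(G)|\geq 2$ is in fact a point the paper glosses over (the statement fails for a graph with a single edge, where $R(I(G))$ is a polynomial ring), so your version is, if anything, more careful.
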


We, however, believe that the lower bound inequality $\mat(G) \leq \reg R(I(G))$ is valid for any finite simple graph $G$.


\begin{thebibliography}{99}

\bibitem{BH}
 W.~Bruns and  J.~Herzog,  Cohen-Macaulay rings,  Cambridge Studies in Advanced Mathematics {\bf39},  Cambridge University Press, Cambridge, 1993.

\bibitem{BH1}
 W.~Bruns and  J.~Herzog, Semigroup rings and simplicial complexes  {\em  J. Pure Appl. Algebra}  {\bf 122} (1997),  185--208.

\bibitem{Cid}
Y.~Cid-Ruiz,
Regularity and Gr\"obner bases of the Rees algebra of edge ideals of bipartite graphs,
{\em Le Matematiche} {\bf 73} (2018), 279--296.

\bibitem{HHO}
J.~Herzog,  T.~Hibi and H.~Ohsugi, Binomial Ideals, Graduate Texts in Mathematics {\bf 279}, Springer, 2018.


\bibitem{OH}
H.~Ohsugi and T.~Hibi,
Normal polytopes arising from finite graphs,
{\em J. Algebra} {\bf 207} (1998), 409--426.


\bibitem{Sta}
R.~P.~Stanley,
A monotonicity property of $h$-vectors and $h^*$-vectors,
{\em Europ. J. Combin.} {\bf 14} (1993), 251--258.

\end{thebibliography}
\end{document}